\pgfplotsset{compat=1.14}
\title[Obstacles for Sobolev-homeomorphisms with low rank]{Obstacles for Sobolev-homeomorphisms with low rank\\-- pointwise a.e. vs distributional Jacobians --}
\author{Woongbae Park}
\address[Woongbae Park]{Department of Mathematics,
215 Carnegie Building, Syracuse University, Syracuse, NY 13244}
\email{wpark06@syr.edu}
\author{Armin Schikorra}
\address[Armin Schikorra]{Department of Mathematics,
University of Pittsburgh,
301 Thackeray Hall,
Pittsburgh, PA 15260, USA}
\email{armin@pitt.edu}
\definecolor{indigo}{rgb}{0.29, 0.0, 0.51}
\definecolor{p1}{gray}{0.4}
\definecolor{p2}{gray}{0.6}
\definecolor{p3}{gray}{0.98}
\definecolor{p4}{gray}{0.8}
\definecolor{p5}{gray}{0.9}
\def\eps{\varepsilon}
\def\B{\mathbb{B}}
\def\S{{\mathbb S}}
\newtheorem{theorem}{Theorem}
\newtheorem{lemma}[theorem]{Lemma}
\newtheorem{corollary}[theorem]{Corollary}
\newtheorem{definition}[theorem]{Definition}
\newtheorem{question}[theorem]{Question}
\def\rank{{\rm rank\,}}
\newcommand{\R}{\mathbb{R}}
\newcommand{\brac}[1]{\left (#1 \right )}
\newcommand{\abs}[1]{\left |#1 \right |}
\newcommand{\Ep}{\bigwedge\nolimits}
\newcommand{\barint}{
\rule[.036in]{.12in}{.009in}\kern-.16in \displaystyle\int }
\newcommand{\barcal}{\mbox{$ \rule[.036in]{.11in}{.007in}\kern-.128in\int $}}
\def\mvint_#1{\mathchoice
          {\mathop{\vrule width 6pt height 3 pt depth -2.5pt
                  \kern -8pt \intop}\nolimits_{\kern -3pt #1}}%
          {\mathop{\vrule width 5pt height 3 pt depth -2.6pt
                  \kern -6pt \intop}\nolimits_{#1}}%
          {\mathop{\vrule width 5pt height 3 pt depth -2.6pt
                  \kern -6pt \intop}\nolimits_{#1}}%
          {\mathop{\vrule width 5pt height 3 pt depth -2.6pt
                  \kern -6pt \intop}\nolimits_{#1}}}
\numberwithin{theorem}{section} \numberwithin{equation}{section}
\newcommand{\aleq}{\precsim}
\let\latexchi\chi
\renewcommand\chi{\@ifnextchar_\sub@chi\latexchi}
\newcommand{\sub@chi}[2]{
  \@ifnextchar^{\subsup@chi{#2}}{\latexchi_{#2}}%
}
\newcommand{\subsup@chi}[3]{
  \latexchi_{#1}^{#3}%
}
\def\tikz@arc@opt[#1]{
  {%
    \tikzset{every arc/.try,#1}%
    \pgfkeysgetvalue{/tikz/start angle}\tikz@s
    \pgfkeysgetvalue{/tikz/end angle}\tikz@e
    \pgfkeysgetvalue{/tikz/delta angle}\tikz@d
    \ifx\tikz@s\pgfutil@empty%
      \pgfmathsetmacro\tikz@s{\tikz@e-\tikz@d}
    \else
      \ifx\tikz@e\pgfutil@empty%
        \pgfmathsetmacro\tikz@e{\tikz@s+\tikz@d}
      \fi%
    \fi
    \tikz@arc@moveto
    \xdef\pgf@marshal{\noexpand%
    \tikz@do@arc{\tikz@s}{\tikz@e}
      {\pgfkeysvalueof{/tikz/x radius}}
      {\pgfkeysvalueof{/tikz/y radius}}}%
  }%
  \pgf@marshal%
  \tikz@arcfinal%
}
\let\tikz@arc@moveto\relax
\def\tikz@arc@movetolineto#1{%
  \def\tikz@arc@moveto{\tikz@@@parse@polar{\tikz@arc@@movetolineto#1}(\tikz@s:\pgfkeysvalueof{/tikz/x radius} and \pgfkeysvalueof{/tikz/y radius})}}
\def\tikz@arc@@movetolineto#1#2{#1{\pgfpointadd{#2}{\tikz@last@position@saved}}}
\tikzset{%
  move to start/.code=\tikz@arc@movetolineto\pgfpathmoveto,%
  line to start/.code=\tikz@arc@movetolineto\pgfpathlineto}
\begin{document}
\begin{abstract}
We show that for any $k$ and $s > \frac{k+1}{k+2}$ there exist neither $W^{s,\frac{k}{s}}$-Sobolev nor $C^s$-H\"older homeomorphisms from the disk $\B^n$ into $\R^N$ whose gradient has rank $< k$ \emph{in distributional sense}. This complements known examples of such kind of homeomorphisms whose gradient has rank $<k$ \emph{almost everywhere}.
\end{abstract}
\maketitle

\section{Introduction}
Throughout this paper let $n \geq 2$ and $s \in (0,1]$. In \cite{FMCO18,LM16} it was shown that for any $s\in (0,1)$ and $k \in \{2,\ldots,n\}$ there exists a $C^s$-homeomorphism (onto its target) $u: \B^n \to \R^n$ such that $\nabla u \in L^1$ and
\begin{equation}\label{eq:ranknablau}
\rank(\nabla u)< k \quad \text{a.e. in $\B^n$}.
\end{equation}

As is well known, the pointwise a.e. derivative is a way less restrictive object than the distributional derivative which captures more fine geometric properties. The simplest example is the Heaviside function which has a.e. vanishing derivative, but is certainly not constant -- because the \emph{distributional} derivative does not vanish. Effects of this type are also known for distributional vs. a.e. Jacobians, see for example \cite{MR4030269,LS21}.

The purpose of this note is to show that results similar to \cite{FMCO18,LM16} are wrong if the pointwise a.e. notion in \eqref{eq:ranknablau} is replaced with a distributional version.

Since a map $u \in C^s$ does not need to be differentiable, the notion of distributional $rank \nabla u$ might not be immediate, but the idea is simple: Take any $k$-form monomial in $\Ep^k \R^N$
\[
 dp^{i_1} \wedge \ldots \wedge dp^{i_k}.
\]
For any smooth map $f: \B^n \to \R^N$, the pullback
\[
 f^\ast(dp^{i_1} \wedge \ldots dp^{i_k}) = \sum_{\alpha_1,\ldots,\alpha_k=1}^n\partial_{\alpha_1} f^{i_1}\, \partial_{\alpha_2} f^{i_2}\ldots \partial_{\alpha_k} f^{i_k}\ dx^{\alpha_1} \wedge \ldots dx^{\alpha_k }
\]
is a $k$-form whose components are the determinants of $k \times k$-submatrices of $(\nabla f^{i_1},\ldots,\nabla f^{i_k}) \in \R^{n\times k}$. In particular, if $f$ is differentiable then $\rank Df < k$ is equivalent to
\[
 f^\ast(dp^{i_1} \wedge \ldots dp^{i_k}) = 0 \quad \forall 1 \leq i_1 < \ldots< i_k \leq N.
\]
On the other hand, $k \times k$-determinants of submatrices of $(\nabla f^{i_1},\ldots,\nabla f^{i_k}) \in \R^{k \times N}$ are Jacobians and those can be defined in a distributional sense for H\"older and Sobolev maps.

Consequently, it is reasonable to say $f: \B^n \to \R^N$ and $k \in \{1,\ldots,N\}$ satisfies
\[
\rank Df < k \quad \text{in distributional sense}
\]
if for any $1 \leq i_1 < \ldots< i_k \leq N$ we have
\[
 f^\ast(dp^{i_1} \wedge \ldots dp^{i_k}) = 0
\]
in distributional sense -- we shall recall the precise meaning of the latter in \Cref{s:distributionalrank}.

Here is our main result:
\begin{theorem}\label{th:nohomok}
Fix any $s \geq \frac{k}{k+1}$, $k \geq 2$. Denote by $\B^n$ the unit ball in $\R^n$. There exists no homeomorphism $u \in W^{s,\frac{k}{s}}(\B^n,\R^N)$ such that
\[
 \rank (\nabla u) < k \quad \text{in distributional sense}.
\]
\end{theorem}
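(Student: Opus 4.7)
The strategy is to argue by contradiction: assume such a homeomorphism $u$ exists, reduce it by slicing and projection to a self-map $w \colon \B^k \to \R^k$, and derive a contradiction between the topological degree of $w$, which is forced to be nonzero by the homeomorphism assumption on $u$, and the distributional Jacobian of $w$, which is forced to vanish by the distributional rank hypothesis.

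The first two steps combine slicing and projection. By a fractional Sobolev--Fubini argument, for a.e.\ $c = (c_{k+1},\ldots,c_n)$ with $|c|<1$ the restriction $v_c := u|_{P_c}$ to the coordinate slice $P_c := \{x \in \B^n : x_j = c_j \text{ for } j > k\}$ belongs to $W^{s,k/s}(\B^k,\R^N)$, remains a topological embedding, and the distributional pullbacks inherit the vanishing property: $v_c^\ast(dp^{i_1}\wedge\cdots\wedge dp^{i_k}) = 0$ for every $I=(i_1,\ldots,i_k)$. Composing with any coordinate $k$-projection $\pi_I \colon \R^N \to \R^k$ yields a self-map $w := \pi_I \circ v_c$ of $\B^k$ whose distributional Jacobian vanishes, since $w^\ast(dy^1 \wedge \cdots \wedge dy^k) = v_c^\ast(dp^{i_1}\wedge\cdots\wedge dp^{i_k}) = 0$. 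A Brezis--Nguyen-type distributional-Jacobian/degree identity, available precisely in the borderline range $s \geq k/(k+1)$, $p = k/s$, then forces the Brouwer degree $\deg(w,\B^k,y_0) = 0$ for every $y_0 \notin w(\partial \B^k)$.

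The central remaining step is to choose $c$ and $I$ so that $\deg(w,\B^k,y_0)\neq 0$ for some interior point $y_0$, thereby contradicting the previous conclusion. Since $v_c(\B^k) \subset \R^N$ is a topological $k$-disk, it has positive $k$-dimensional Hausdorff measure, so by a Federer--Besicovitch-type projection estimate there is at least one coordinate $k$-projection $\pi_I$ with $|\pi_I(v_c(\B^k))|_k > 0$. Combined with Jordan--Brouwer separation applied to the projected boundary $\pi_I \circ v_c(\partial \B^k) \subset \R^k$ and Brouwer's invariance of domain on an appropriate subregion of $v_c(\B^k)$, this locates an interior point of positive winding and hence of nonzero Brouwer degree.

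The main obstacle is exactly this last step: at the borderline regularity $s = k/(k+1)$ the embedding $v_c$ is merely continuous and not H\"older, so its image can be highly irregular and one cannot invoke the implicit function theorem or bilipschitz arguments to locally invert a projection. One must rely instead on purely topological and measure-theoretic inputs --- projection estimates for Hausdorff measure, Jordan--Brouwer separation, and the incompressibility of the topological $k$-disk $v_c(\B^k)$ under coordinate projections. A secondary technical point is the fractional Sobolev--Fubini theorem itself and the commutation of distributional pullbacks with slicing, both of which require care at the borderline exponent.
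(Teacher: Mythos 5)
Your first two steps (fractional Sobolev--Fubini slicing, and preservation of the distributional rank condition under slicing) match the paper's reduction (Lemma~\ref{la:slicing}), and the idea of exploiting the continuity of the distributional Jacobian in the supercritical range $s \geq k/(k+1)$, $p = k/s$ is indeed the analytic core (Lemma~\ref{s:distrankest}). However, the crucial topological step is where your argument diverges from the paper, and it is where the gap is.

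The paper does \emph{not} project to $\R^k$. Instead it stays in $\R^N$ and uses the linking construction of \cite{HS23,S20} (Lemma~\ref{la:HSomega}): since the boundary trace $f\colon\S^{n-1}\to\R^N$ is an embedding, the reduced homology $\tilde H_{N-n}(\R^N\setminus f(\S^{n-1}))$ is nontrivial, and this is encoded by a compactly supported $(n-1)$-form $\omega$ with $d\omega\equiv 0$ near $f(\S^{n-1})$ and $\int_{\S^{n-1}} f^\ast\omega = 1$. Stokes plus the continuity estimate for the distributional Jacobian then yields the contradiction, after noting that the coefficients $\kappa_I(u_\eps)$ of $d\omega$ vanish on $\partial\B^n$ and can therefore be replaced by $C_c^\infty$ test functions.

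Your proposed final step, by contrast, has a genuine gap, in fact two. First, a Federer--Besicovitch-type argument does not supply a coordinate $k$-projection with a positive-measure image: the Besicovitch projection theorem points in the \emph{opposite} direction, asserting that for purely $k$-unrectifiable sets of finite $\mathcal H^k$-measure almost every $k$-plane projection has Lebesgue measure zero, and at this regularity $v_c(\B^k)$ can be such a set (and can even have $\mathcal H^k=\infty$, leaving the theorem inapplicable). Second, and more fundamentally, even if some projection $\pi_I(v_c(\B^k))$ had positive Lebesgue measure in $\R^k$, this does \emph{not} force the Brouwer degree of $w=\pi_I\circ v_c$ to be nonzero at some point: a map can have an image of full measure with degree identically zero off the boundary image (think of a fold map such as $(x,y)\mapsto(x,|y-1/2|)$ on the unit square). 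Moreover, the Jordan--Brouwer separation and invariance-of-domain steps you invoke require the projected boundary $\pi_I\circ v_c|_{\partial\B^k}$ to remain an embedding of $\S^{k-1}$ into $\R^k$, which the projection can easily destroy; after projection the boundary curve may be non-injective and its image need not separate $\R^k$. This is precisely the obstruction that the paper's linking-form approach is designed to bypass: it extracts the nontrivial topology directly from the embedding $f(\S^{n-1})\hookrightarrow\R^N$ without ever collapsing to a self-map of $\R^k$.
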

Here and henceforth for $s \in (0,1)$ the fractional Sobolev space $W^{s,p}(\Omega)$ is the one induced by the Gagliardo seminorm
\[
 [f]_{W^{s,p}(\Omega)} := \brac{\int_{\Omega} \int_{\Omega} \frac{|f(x)-f(y)|^{p}}{|x-y|^{n+sp}}\, dx\, dy}^{\frac{1}{p}}.
\]

By slicing arguments and Fubini's theorem, cf. \Cref{la:slicing}, any $W^{s,\frac{k}{s}}(\B^n,\R^N)$-homeomorphism for $k < n$ induces a $W^{s,\frac{k}{s}}(\B^k,\R^N)$-homeomorphism on the $k$-dimensional ball $\B^k$ -- and the above notion of distributional rank is stable under that slicing operation. Thus, when proving \Cref{th:nohomok} one can assume w.l.o.g. $k=n$. Actually, in the case $k=n$, we don't even have to assume that $u: \B^n \to \R^N$ is a homeomorphism onto its target, it is only used that $u$ restricted to the boundary $\partial \B^n$ is one-to-one. Precisely we have
\begin{theorem}\label{th:nohomon}
Let
\[
 f: \S^{n-1} \to \R^N
\]
be a homeomorphism.

Then for any $s\in (0,1]$, $s \geq \frac{n}{n+1}$, there exists no map $u \in W^{s,\frac{n}{s}}(\B^n,\R^N)$ (homeomorphism or not) with the properties
\begin{itemize}
 \item $\rank Du < n$ in distributional sense in $\B^n$
 \item $f=u\Big |_{\partial \B^n}$ in the sense of traces
\end{itemize}
\end{theorem}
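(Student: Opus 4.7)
The plan combines a distributional Stokes argument on $\mathbb{B}^n$ with a topological non-triviality statement for the boundary map $f$. First, extending the hypothesis: by assumption $u^*(dp^{i_1} \wedge \ldots \wedge dp^{i_n}) = 0$ as a distribution on $\mathbb{B}^n$ for each index tuple. By linearity and chain-rule multiplication by smooth functions $h(u)$ (legitimate because at the critical scaling $sp = n$ the map $u$ has VMO-type continuity), this extends to $u^* \omega = 0$ distributionally on $\mathbb{B}^n$ for every smooth $n$-form $\omega$ on $\mathbb{R}^N$; equivalently $u^*(d\eta) = 0$ for every smooth $(n-1)$-form $\eta$ on $\mathbb{R}^N$.

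Next, passing to the boundary: I would test the identity $u^*(d\eta) = 0$ against a cutoff $\chi_\varepsilon \in C_c^\infty(\mathbb{B}^n)$ with $\chi_\varepsilon \to 1$ and $d\chi_\varepsilon$ concentrating on $\partial \mathbb{B}^n$, obtaining
\[
0 = \langle u^*(d\eta), \chi_\varepsilon \rangle = -\int_{\mathbb{B}^n} u^*\eta \wedge d\chi_\varepsilon \xrightarrow[\varepsilon \to 0]{} \int_{\mathbb{S}^{n-1}} f^*\eta,
\]
the right-hand side interpreted distributionally via the trace theory for $W^{s, n/s}(\mathbb{B}^n)$. The threshold $s \geq n/(n+1)$ is the sharp regularity putting $f$ into the critical Sobolev trace space $W^{s(n-1)/n,\, n/s}(\mathbb{S}^{n-1})$, where $f^*\eta$ is a well-defined distribution. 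The conclusion is that the current $f_*[\mathbb{S}^{n-1}]$ on $\mathbb{R}^N$ annihilates every smooth $(n-1)$-form, hence $f_*[\mathbb{S}^{n-1}] = 0$.

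The contradiction then comes from the topology. Since $f$ is a homeomorphism onto its image $\Sigma = f(\mathbb{S}^{n-1})$, I would exhibit an $(n-1)$-form $\eta$ pairing nontrivially with $f_*[\mathbb{S}^{n-1}]$. In the model case $N = n$, the Jordan--Brouwer theorem gives a bounded domain $\Omega$ with $\partial\Omega = \Sigma$, and $\eta = p^1 \, dp^2 \wedge \ldots \wedge dp^n$ gives $\int_{\mathbb{S}^{n-1}} f^*\eta = \pm |\Omega| \neq 0$. For $N > n$, by invariance of domain $\Sigma$ cannot lie in any $(n-1)$-dimensional affine subspace, so there exists a coordinate projection $\pi: \mathbb{R}^N \to \mathbb{R}^n$ for which $\pi \circ f$ has nonzero Brouwer degree at some point, and $\eta = \pi^*(p^1 \, dp^2 \wedge \ldots \wedge dp^n)$ gives the analogous contradiction.

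The main technical difficulty lies in the second step: rigorously justifying the distributional Stokes identity at the critical scaling $sp = n$. One must verify that $u^*\eta$, a product coupling a VMO-type factor of $u$ with $n-1$ distributional derivatives of $u$, lies in a current space supporting the cutoff limit against $d\chi_\varepsilon$, and that this limit genuinely yields the boundary trace $f^*\eta$ in the right trace space on $\mathbb{S}^{n-1}$. A secondary difficulty in the $N > n$ case of the topological step is ensuring the existence of a projection with nontrivial degree, which amounts to showing that any topological embedding $\mathbb{S}^{n-1} \hookrightarrow \mathbb{R}^N$ produces a non-zero $(n-1)$-dimensional current; this likely requires an area-formula or genericity argument for the rectifiable current induced by the trace.
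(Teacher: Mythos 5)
Your proposal shares the high-level strategy with the paper (the rank condition forces a distributional Stokes identity, and a topological pairing against the embedded sphere yields a contradiction), but both the analytic and the topological execution have genuine gaps, and the paper's route differs in a way that avoids exactly the difficulties you flag.

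The central analytic gap is your second step. You want to test $u^\ast(d\eta)=0$ against a cutoff $\chi_\eps$ concentrating on $\partial\B^n$ and extract the boundary term $\int_{\S^{n-1}}f^\ast\eta$ in the limit. At the critical scaling $sp=n$ the quantity $u^\ast\eta$ is a product of a bounded function of $u$ with $(n-1)$ distributional derivatives of a $W^{s,n/s}$ map; pairing this against the singularly concentrating $d\chi_\eps$ and identifying the limit with a well-defined trace of $f^\ast\eta$ is precisely the kind of delicate commutator/trace estimate you acknowledge as ``the main technical difficulty,'' and the proposal gives no mechanism for closing it. The paper sidesteps it entirely by choosing the form from the linking lemma (Lemma~\ref{la:HSomega}): $\omega$ is picked so that $d\omega\equiv 0$ in a neighborhood of $f(\S^{n-1})$, which forces the test functions $\kappa_I(u_\eps)$ to vanish identically on $\partial\B^n$. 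Hence $\kappa_I(u)\in W^{s,n/s}_0(\B^n)$ and can be approximated by genuine $C_c^\infty(\B^n)$ functions $\varphi_\delta$; one then applies the rank hypothesis to $\varphi_\delta$ directly and uses the continuity estimate of Lemma~\ref{s:distrankest} together with the embedding $(1-s)n\le s$ to pass from $\kappa_I(u_\eps)$ to $\varphi_\delta$. No boundary-concentration limit is ever taken.

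The topological step also has a gap for $N>n$. From the fact that $\Sigma=f(\S^{n-1})$ does not lie in an $(n-1)$-dimensional affine subspace you infer that some coordinate projection $\pi\circ f$ has nonzero Brouwer degree, but this does not follow: not lying in a hyperplane is a far weaker statement than admitting a linear projection with nontrivial degree, and for wild topological embeddings there is no reason a coordinate (or even linear) projection should detect the nontrivial cycle. The correct tool is the one the paper uses: Alexander duality, packaged analytically in Lemma~\ref{la:HSomega}, gives a compactly supported $(n-1)$-form $\omega$ with $d\omega\equiv 0$ near $\Sigma$ and $\int_{\S^{n-1}}\tilde f^\ast\omega=1$ for all $\tilde f$ uniformly close to $f$. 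This works for any topological embedding with $N\ge n+2$, needs no genericity or rectifiability argument, and — crucially — the vanishing of $d\omega$ near $\Sigma$ is exactly what makes the analytic step go through. Your first step (promoting $u^\ast(dp^I)=0$ to $u^\ast\tau=0$ for all smooth $n$-forms $\tau$) is consistent with the paper's computation but must be carried out with the same estimate (Lemma~\ref{s:distrankest}); invoking ``VMO-type continuity'' alone does not justify multiplying the Jacobian distribution by $h(u)$.
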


Since by Sobolev embedding $C^{s+\eps} \hookrightarrow W^{s,\frac{n}{s}}_{loc}$, we have in particular

\begin{corollary}\label{co:nohomeoCs}
Let $s > \frac{k}{k+1}$. The $C^s$-homeomorphism as the one constructed in \cite{FMCO18,LM16} can not exist if the assumption
\[
 \rank Du < k \quad \text{a.e.}
\]
is substituted with
\[
 \rank Du < k \quad \text{in distributional sense}.
\]
\end{corollary}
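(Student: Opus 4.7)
The plan is to deduce the corollary directly from \Cref{th:nohomok} by trading a sliver of H\"older regularity for fractional Sobolev regularity. Since $s > \tfrac{k}{k+1}$, I can fix an auxiliary exponent $s' \in [\tfrac{k}{k+1}, s)$; the strict inequality in the hypothesis is precisely what makes this choice possible.

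First I would verify that any $C^s(\overline{\B^n},\R^n)$-map $u$ lies in $W^{s',k/s'}(\B^n,\R^n)$. This is a short Gagliardo-seminorm computation: inserting the pointwise H\"older bound $|u(x)-u(y)|\leq [u]_{C^s}\,|x-y|^s$ into the definition of the seminorm yields
\[
[u]_{W^{s',k/s'}(\B^n)}^{k/s'} \ \leq\ [u]_{C^s}^{k/s'}\ \int_{\B^n}\!\int_{\B^n}|x-y|^{(s-s')k/s'\,-\,n}\,dx\,dy,
\]
and the last integral is finite because $(s-s')k/s'>0$. In particular, the $C^s$-homeomorphism sits inside the Sobolev class to which \Cref{th:nohomok} applies.

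Next I would confirm that the distributional rank condition is intrinsic to $u$ and does not depend on the ambient function space. The distributions $u^{\ast}(dp^{i_1}\wedge\ldots\wedge dp^{i_k})$ are defined (as will be recalled in \Cref{s:distributionalrank}) by integration-by-parts against smooth, compactly supported test forms, and the resulting distributions agree whether we view $u$ as a $C^s$-map or as an element of $W^{s',k/s'}$. Hence the assumption ``$\rank Du<k$ in distributional sense'' transfers intact from the H\"older framework to the Sobolev one.

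Finally I would invoke \Cref{th:nohomok} with the triple $(k,s',\R^n)$: since $s'\geq\tfrac{k}{k+1}$, no homeomorphism $u\in W^{s',k/s'}(\B^n,\R^n)$ with distributional rank $<k$ can exist, contradicting the assumed existence of the $C^s$-homeomorphism. The bulk of the work is contained in \Cref{th:nohomok}; the only non-routine verification here is the compatibility of the two distributional definitions of the pullback form $u^{\ast}(dp^{i_1}\wedge\ldots\wedge dp^{i_k})$ across H\"older and Sobolev regularity scales -- a check that should be essentially immediate once the definition in \Cref{s:distributionalrank} is on the table.
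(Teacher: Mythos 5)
Your proof is correct and follows essentially the same route as the paper: the paper deduces the corollary from \Cref{th:nohomok} via the embedding $C^{s+\eps}\hookrightarrow W^{s,\cdot}_{\loc}$, which is exactly your Gagliardo-seminorm computation after choosing $s'\in[\tfrac{k}{k+1},s)$, and the compatibility of the distributional pullback across regularity scales that you flag is precisely what the last part of \Cref{s:distrankest} (and the remark following it) is there to guarantee.
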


The latter strengthens in particular \cite[Theorem 12]{FMCO18} where it is shown that a homeomorphism such as the one constructed in \cite{FMCO18} cannot exist if we additionally assume it belongs to $W^{1,k}$. Observe that for $W^{1,k}$-maps distributional and a.e. notion of $\rank Du < k$ coincide.

The assumption $s > \frac{k}{k+1}$ in \Cref{co:nohomeoCs} is notable. The notion of $\rank Df < k$ in distributional sense is well-defined for $C^s$-maps with $s > 1-\frac{1}{k}$, see \Cref{s:distributionalrank}.
\begin{question}\label{q:q}
Does \Cref{co:nohomeoCs} hold for $s \in (1-\frac{1}{k}, \frac{k}{k+1})$?
\end{question}

Indeed, \Cref{q:q} is related to a conjecture by Gromov that one cannot $C^s$-embedd two-dimensional surfaces into the Heisenberg group $\mathbb{H}_1$ for $s > \frac{1}{2}$. This is known to be true for $s >\frac{2}{3}$, \cite{Gromov96}. Notably Wenger and Young \cite{WengerYoung18} recently constructed examples of $C^s$-maps from two-dimensional surfaces into the Heisenberg group, whose \emph{boundary map} is homeomorphic -- which suggests that the threshhold $s \geq \frac{n}{n+1}$ in \Cref{th:nohomon} could be sharp at least in some dimensions. As a matter of fact, our main tool is a consequence of a technique developed for maps into the Heisenberg group in \cite{HS23}: Any homeomorphism can be nontrivially ``linked'' with a differential form, more precisely

\begin{lemma}[\cite{HS23}]
Let $f: \S^{n-1} \to \R^N$, $N \geq n+2$, be a smooth homeomorphism. There exist $\omega \in C_c^\infty(\Ep^{n-1}\R^N)$ such that $d\omega \equiv 0$ in a neighborhood of $f(\S^{n-1}) \subset \R^N$ and \begin{equation} \label{eq:linknonzero}\int_{\S^{n-1}} f^\ast(\omega) = 1.\end{equation}
\end{lemma}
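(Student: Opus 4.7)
The plan is to realize $\omega$ as a cutoff of a Thom-type representative supported in a tubular neighborhood of $M := f(\S^{n-1})$. Reading ``smooth homeomorphism'' as ``smooth embedding,'' $M$ is a compact oriented $(n-1)$-dimensional submanifold of $\R^N$ and $f: \S^{n-1} \to M$ is a diffeomorphism. The tubular neighborhood theorem then furnishes an open set $U$ with $M \subset U \subset \R^N$ together with a smooth retraction $\pi: U \to M$; this is the device that transfers forms from the submanifold back to the ambient space while retaining closedness.

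Concretely, choose a smooth $(n-1)$-form $\mu$ on $M$ with $\int_{\S^{n-1}} f^\ast \mu = 1$; since $\dim M = n-1$ it is automatically closed. Setting $\omega_0 := \pi^\ast \mu$ produces a closed smooth $(n-1)$-form on $U$. Fix a smaller open set $V$ with $M \subset V$ and $\overline{V} \subset U$, and pick $\chi \in C_c^\infty(\R^N)$ with $\chi \equiv 1$ on $V$ and $\supp \chi \subset U$. Define
\[
 \omega := \chi\, \omega_0,
\]
extended by $0$ outside $U$. Then $\omega \in C_c^\infty(\Ep^{n-1}\R^N)$, and since $d\omega_0 = \pi^\ast(d\mu) = 0$ one has $d\omega = d\chi \wedge \omega_0$, which vanishes on $V$ (where $d\chi \equiv 0$) as well as outside $\supp \chi$. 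Hence $d\omega \equiv 0$ in the neighborhood $V$ of $M = f(\S^{n-1})$, as required.

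For \eqref{eq:linknonzero}, note that $f(\S^{n-1}) = M \subset V$ so $\chi \circ f \equiv 1$, and $\pi \circ f = f$ as a map $\S^{n-1} \to M$ because $\pi$ restricts to the identity on $M$. Therefore
\[
 \int_{\S^{n-1}} f^\ast \omega = \int_{\S^{n-1}} (\chi \circ f)\, (\pi \circ f)^\ast \mu = \int_{\S^{n-1}} f^\ast \mu = 1
\]
by the choice of $\mu$.

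The proof is essentially the tubular neighborhood theorem plus a cutoff, and I do not foresee any serious obstacle. The only technical point requiring care is the interpretation of ``smooth homeomorphism'': the argument uses that $Df$ is everywhere injective so that $M$ is a genuine smooth submanifold; otherwise one must first smoothly approximate $f$ by an embedding, which is unproblematic in the codimension regime $N - (n-1) \geq 3$ that governs the broader paper.
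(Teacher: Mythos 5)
Your construction is correct and is, in the smooth embedding case, a clean and somewhat more elementary route than the one the paper points to. The paper itself does not reprove this lemma but refers to \cite[Prop.~9.2]{S20} and \cite[Lemma~2.6]{HS23}, and explicitly frames the argument as ``a reformulation of the well-known fact from Algebraic Topology that $H_{N-(n-1)-1}(\R^N \setminus K)$ is nontrivial if $K$ is homeomorphic to $\S^{n-1}$,'' i.e.\ Alexander duality: one finds a cycle of complementary dimension \emph{linked} with $K=f(\S^{n-1})$, and $d\omega$ is built as a Poincar\'e-type dual of that linked cycle. You instead construct $\omega$ directly from the image side, pulling back a normalized top form on $M$ through a tubular-neighborhood retraction and then cutting off; the two constructions are related by duality, but yours sidesteps the complementary cycle entirely. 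That is a genuine simplification when $f$ is an embedding.

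However, two caveats deserve emphasis. First, your fallback plan (``smoothly approximate $f$ by an embedding, which is unproblematic in the codimension regime $N-(n-1)\geq 3$'') is too optimistic: standard Whitney/Thom general-position arguments to perturb a smooth injective map into an embedding require $N \geq 2(n-1)+1 = 2n-1$, whereas $N\geq n+2$ only dominates $2n-1$ for $n\leq 3$. For $n\geq 4$ one cannot simply invoke genericity, so the interpretive step ``smooth homeomorphism $\Rightarrow$ may as well be an embedding'' would need a separate argument. Second, and more importantly, your construction is intrinsically tied to $M$ being a smooth submanifold, because the tubular neighborhood retraction is the engine producing the closed $\omega_0$. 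The sharper version actually used in the paper (Lemma~3.1) requires only that $f$ be a \emph{homeomorphism}, with no smoothness at all, so $f(\S^{n-1})$ need not be a manifold. There the Alexander duality/\v{C}ech cohomology route---passing to the direct limit of de~Rham cohomology of neighborhoods of the compact set $K$---is what is needed, and your tubular-neighborhood argument does not extend. So: correct and more direct for the statement as phrased (embedding reading), but strictly less general than the cited proof, and the ``approximate by an embedding'' escape hatch is not automatic in the relevant codimension range.
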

See \Cref{la:HSomega} for more explanation and a slightly sharper statement. The main idea to prove \Cref{th:nohomon} is to show that the rank condition is not compatible with \eqref{eq:linknonzero}.

Lastly, let us mention that \Cref{co:nohomeoCs} can also be proven for the limiting H\"older case $C^{\frac{k}{k+1}_+}$, the adaptations are left to the reader -- the underlying technical arguments are discussed in \cite{HS23}.

{\bf Acknowledgement:}
Funding is acknowledged as follows

\begin{itemize}
\item A.S. is an Alexander-von-Humboldt Fellow.
\item A.S. is funded by NSF Career DMS-2044898.
\end{itemize}

\section{The distributional rank condition}\label{s:distributionalrank}
The theory of distributional Jacobians has a long tradition with celebrated contributions by Ball \cite{B76}, Brezis, Nirenberg \cite{BN95}, Coifman, Lions \cite{CLMS}, M\"uller \cite{M90}, Reshetnyak \cite{R68}, Wente \cite{W69}, Tartar \cite{T79}, among many others. We recall and adapt the notion to our setting, but the results of this section are likely well-known at least to experts.

\begin{definition}\label{def:distrank}
Let $s \in (0,1]$, $p \in (1,\infty)$. We say that $f \in W^{s,p}(\B^n,\R^N)$ satisfies
\[
 f^\ast(dp^{i_1} \wedge \ldots dp^{i_k}) = 0
\]
in ($W^{s,p}$-)distributional sense if for any $\varphi \in C_c^\infty(\B^n)$ and any $f_\eps \in C^\infty(\overline{\B^n},\R^N)$ with
\[
 \|f_\eps-f\|_{W^{s,p}} \xrightarrow{\eps \to 0} 0
\]
we have
\[
 \lim_{\eps \to 0} \int_{\B^n} f_\eps^\ast(dp^{i_1} \wedge \ldots dp^{i_k}) \varphi = 0.
\]
\end{definition}

The following type of estimate is essentially known since \cite{SY99}, inspired by the results in \cite{CLMS}. In special cases an extremely elegant proof using harmonic extensions was given in \cite{BN2011}, see also \cite{LS20} for the relation to commutator estimates and harmonic extensions. See also \cite{SVS20} where this is revisited using the arguments of \cite{LS20}.
\begin{lemma}\label{s:distrankest}
Let $s>1-\frac{1}{k}$ and $p > k$.

For any $f \in W^{s,p}(\B^n,\R^N)$ and $f_\eps$ as in \Cref{def:distrank}
\[
 f^\ast(dp^{i_1} \wedge \ldots dp^{i_k})[\varphi]\equiv f^\ast_{W^{s,p}}(dp^{i_1} \wedge \ldots dp^{i_k})[\varphi] := \lim_{\eps \to 0} \int_{\B^n} f_\eps^\ast(dp^{i_1} \wedge \ldots dp^{i_k}) \varphi = 0.
\]
is a linear functional on $\varphi \in C_c^\infty(\B^n)$, independent of the precise choice of $f_\eps$.

Moreover we have
\[
 \abs{f^\ast(dp^{i_1} \wedge \ldots dp^{i_k})[\varphi]} \aleq [f]_{W^{s,p}}^{k} [\varphi]_{W^{(1-s)k,\frac{p}{p-k}}}
\]
and for $f_1,f_2 \in W^{s,p}(\B^n,\R^N)$ we have
\[
 \abs{f_1^\ast(dp^{i_1} \wedge \ldots dp^{i_k})[\varphi]-f_2^\ast(dp^{i_1} \wedge \ldots dp^{i_k})[\varphi]} \aleq [f_1-f_2]_{W^{s,p}(\B^n)}\, \brac{[f_1]_{W^{s,p}(\B^n)}^{k-1} + [f_2]^{k-1}_{W^{s,p}(\B^n)}} [\varphi]_{W^{(1-s)k,\frac{p}{p-k}}}
\]
In particular if $f \in W^{s_1,p_1}(\B^n,\R^N) \cap W^{s_2,p_2}(\B^n,\R^N)$ then the linear functional as an $W^{s_1,p_1}$-limit or an $W^{s_2,p_2}$-limit coincides, i.e.
\[
f_{W^{s_1,p_1}}^\ast(dp^{i_1} \wedge \ldots dp^{i_k})[\varphi]=f_{W^{s_2,p_2}}^\ast(dp^{i_1} \wedge \ldots dp^{i_k})[\varphi]
\]
for all $\varphi \in C_c^\infty(\B^n,\R^N)$.
\end{lemma}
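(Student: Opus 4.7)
The plan is to reduce all four claims to a single estimate on smooth maps, namely the \emph{smooth difference estimate}: the third displayed inequality of the statement, established first under the additional hypothesis $f_1, f_2 \in C^\infty(\overline{\B^n},\R^N)$. From this, everything else is bookkeeping. For \emph{existence and independence of the limit}, apply the smooth estimate to pairs $(f_\eps, f_{\eps'})$ from a single approximating sequence and to cross-pairs $(f_\eps, \tilde f_\eps)$ from two sequences; the right-hand sides vanish since $[f_\eps - f_{\eps'}]_{W^{s,p}} \to 0$, showing simultaneously that $\int f_\eps^\ast(\omega)\varphi$ is Cauchy and that its limit does not depend on the sequence. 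Linearity in $\varphi$ is inherited from the smooth case. The \emph{basic bound} is the difference estimate with $f_2 \equiv 0$ and $f_1 = f_\eps \to f$, passed to the limit; the full difference estimate for general $W^{s,p}$ maps extends by the same density argument. Finally, \emph{independence of the Sobolev space}: if $f \in W^{s_1,p_1} \cap W^{s_2,p_2}$, choose $f_\eps = f \ast \rho_\eps$ (standard mollification), which converges in both norms simultaneously, so the two a priori different limits coincide.

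The smooth difference estimate itself is proved in two steps. First, telescope
\[
f_1^\ast \omega - f_2^\ast \omega = \sum_{j=1}^{k} df_1^{i_1} \wedge \cdots \wedge d(f_1^{i_j} - f_2^{i_j}) \wedge \cdots \wedge df_2^{i_k},
\]
reducing matters to the \emph{basic fractional Jacobian estimate}: for smooth scalars $g^1,\ldots,g^k,\varphi$,
\[
\left| \int_{\B^n} dg^1 \wedge \ldots \wedge dg^k \cdot \varphi \right| \aleq \prod_{j=1}^k [g^j]_{W^{s,p}} \cdot [\varphi]_{W^{(1-s)k, \frac{p}{p-k}}},
\]
understood componentwise in the $k\times k$ minor sense when $k < n$. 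Sanity check: Hölder closes as $k/p + (p-k)/p = 1$, and the total fractional-derivative order $k\cdot s + k\cdot(1-s) = k$ matches the $k$ derivatives in the wedge. Moreover $s > 1 - 1/k$ forces $(1-s)k < 1$, so the test-function norm is a genuine first-order fractional Sobolev seminorm.

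For the basic Jacobian estimate I would follow the harmonic-extension strategy of \cite{BN2011, LS20} (see also \cite{SVS20}). Passing to Poisson extensions $\tilde g^j, \tilde \varphi$ on $\R^{n+1}_+$ and combining Stokes' theorem with the identity $d(\tilde g^1 \, d\tilde g^2 \wedge \ldots \wedge d\tilde g^k \wedge d\tilde \varphi) = \pm d\tilde g^1 \wedge \ldots \wedge d\tilde g^k \wedge d\tilde \varphi$ rewrites the boundary integral as a bulk integral of a product of $k+1$ gradients in the half-space. Weighted Hölder combined with the Caffarelli-Silvestre/Stinga-Torrea characterization $[g]_{W^{s,p}(\R^n)} \sim \|t^{1-s} \nabla \tilde g\|_{L^p(t^{-1}\,dt\,dx)}$ (and its higher-order analogue for $\varphi$) then closes the bound; the weight exponents distribute precisely because of the matched scaling noted above. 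The main obstacle is exactly this final step: the weighted bookkeeping is delicate, and one must localize to $\B^n$ by extension or cutoff without disturbing the wedge structure. This is a fractional analogue of the Coifman-Lions-Meyer-Semmes Jacobian estimate \cite{CLMS}, and with the cited machinery in place the remaining argument is routine.
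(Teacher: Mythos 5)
The paper does not actually prove \Cref{s:distrankest}: it asserts the estimate and points to the literature (it writes that the estimate ``is essentially known since \cite{SY99}, inspired by \cite{CLMS}'' and that a harmonic-extension proof is in \cite{BN2011}, cf.\ \cite{LS20,SVS20}). Your sketch follows precisely the harmonic-extension route of \cite{BN2011,LS20}, so it is consistent with the references the paper invokes rather than with a proof the paper itself supplies.

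As to correctness of the outline: the telescoping identity is right (for $k$ factors it reduces the difference $f_1^\ast\omega-f_2^\ast\omega$ to a sum of $k$ wedges, each containing a single factor $d(f_1^{i_j}-f_2^{i_j})$), and the reduction of the four claims of the lemma to the one smooth multilinear estimate via density is clean and standard. The exponent bookkeeping in the core estimate also closes: H\"older with $k$ copies of $p$ and one copy of $\frac{p}{p-k}$ sums to $1$, and the $t$-powers match since $(1-\sigma)+k(1-s)=1$ when $\sigma=(1-s)k$, so after inserting the weights each factor is controlled by the Caffarelli--Silvestre-type characterization $[g]_{W^{s,p}}\sim\|t^{1-s}\nabla\tilde g\|_{L^p(t^{-1}dt\,dx)}$. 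The localization from $\R^n$ to $\B^n$ is handled by extending $f$ via a bounded extension operator and using that $\varphi$ has compact support in $\B^n$; your remark that this needs care is fair but it is a routine point. Two small cautionary notes: (i) for the space-independence claim, a plain mollification $f\ast\rho_\eps$ does not land in $C^\infty(\overline{\B^n})$ --- one should first extend $f$ to a larger ball (or dilate) and then mollify, so the common approximant is genuinely admissible in \Cref{def:distrank} for both norms; (ii) for $k<n$ one should say explicitly that the estimate is applied to each coefficient, i.e.\ each $k\times k$ minor $\frac{\partial(g^1,\dots,g^k)}{\partial(x^{\alpha_1},\dots,x^{\alpha_k})}$, wedging with the remaining coordinate $1$-forms before applying Stokes in $\R^{n+1}_+$; you gesture at this but it deserves a sentence. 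With those points addressed, the sketch is a correct outline in the spirit of the cited literature, though the last step is asserted rather than carried out.
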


Observe that this gives naturally also a suitable notion of $C^s$-distributional rank, because $C^s$ embedds into $W^{s-\eps,q}$ for any $\eps > 0$, $q \in (1,\infty)$.

\Cref{th:nohomok} is a consequence of \Cref{th:nohomon}, once we observe the following restriction result.
\begin{lemma}\label{la:slicing}
Assume that $f \in W^{s,p}(\S^{k+1},\R^N)$ be continuous  for $s>\frac{k+1}{k+2}$, $p >k$. Then, if we slice
\[
 \S^{k+1} = \bigcup_{t \in [-1,1]} \{t\} \times \sqrt{1-t^2} \S^{k}
\]
Then for $\mathcal{L}^1$-a.e. $t \in (-1,1)$
\begin{equation}\label{eq:sliceasdas}
 g:= f \Big |_{\{t\} \times \sqrt{1-t^2} \S^{k}} \in W^{s,p}(\{t\} \times \sqrt{1-t^2} \S^{k},\R^N)
\end{equation}
and if for some $j \in \{1,\ldots,{k+1}\}$ we have
\[
 \rank Df < j \quad \text{in distributional sense in $\S^{k+1}$}
\]
then
\[
 \rank Dg < j \quad \text{in distributional sense in $\S^k$}
\]
\end{lemma}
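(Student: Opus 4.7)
\textbf{Proof plan for \Cref{la:slicing}.}

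The strategy combines a Fubini-type slicing of the Gagliardo seminorm with the quantitative continuity of the distributional pullback given by \Cref{s:distrankest}. First I establish \eqref{eq:sliceasdas}. Pick a smooth mollification $f_\varepsilon \in C^\infty(\S^{k+1},\R^N)$ with $\|f_\varepsilon - f\|_{W^{s,p}(\S^{k+1})} \to 0$. In local charts adapted to the slicing $\S^{k+1} = \bigcup_t \S^k_t$ (with $\S^k_t := \{t\} \times \sqrt{1-t^2}\,\S^k$), the standard Fubini-type characterization of the fractional Gagliardo seminorm yields
\[
 \int_{-1}^1 \bigl[h|_{\S^k_t}\bigr]_{W^{s,p}(\S^k_t)}^p \, dt \aleq [h]_{W^{s,p}(\S^{k+1})}^p
\]
for every $h \in W^{s,p}(\S^{k+1})$. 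Applied to $h=f_\varepsilon-f$ and up to passing to a subsequence, this gives $f_\varepsilon|_{\S^k_t} \to f|_{\S^k_t}$ in $W^{s,p}(\S^k_t)$ for $\mathcal{L}^1$-a.e.\ $t$, which already yields \eqref{eq:sliceasdas}.

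Next, fix a monomial $\omega = dp^{i_1} \wedge \cdots \wedge dp^{i_j}$ and, in the adapted coordinates $(s,y)$, write $[f_\varepsilon^\ast\omega]_I(s,y)$ for the coefficient of $dy^{\alpha_1}\wedge\cdots\wedge dy^{\alpha_j}$ where $I=(\alpha_1,\ldots,\alpha_j)$ is a tangential multi-index. On each slice this coefficient coincides with the corresponding coefficient of $g_{\varepsilon,s}^\ast\omega$, with $g_{\varepsilon,s}:=f_\varepsilon|_{\S^k_s}$. For a fixed $\psi \in C_c^\infty(\S^k)$ define
\[
 G_\varepsilon(s) := \int_{\S^k_s} [g_{\varepsilon,s}^\ast\omega]_I\,\psi \, d\mathcal{H}^k.
\]
Applying \Cref{s:distrankest} on $\S^k_s$ together with the $W^{s,p}$-convergence from the previous step, one has pointwise a.e.\ convergence $G_\varepsilon(s) \to G(s) := g_s^\ast\omega_I[\psi]$ plus the uniform bound
\[
 |G_\varepsilon(s)| \aleq [g_{\varepsilon,s}]_{W^{s,p}(\S^k_s)}^{j}\,[\psi]_{W^{(1-s)j,\,p/(p-j)}}.
\]
Integrating this bound in $s$ and invoking the slicing inequality shows $\|G_\varepsilon\|_{L^{p/j}((-1,1))}$ is uniformly bounded; since $p/j>1$, Vitali's theorem upgrades the pointwise convergence to $G_\varepsilon \to G$ in $L^1_{\loc}((-1,1))$.

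On the other hand, for any $\chi \in C_c^\infty((-1,1))$ the function $\chi(s)\psi(y)$ is an admissible scalar test on $\S^{k+1}$, so the distributional rank hypothesis on $\S^{k+1}$ yields
\[
 \lim_{\varepsilon\to 0}\int_{-1}^1 \chi(s) G_\varepsilon(s)\, ds = \lim_{\varepsilon\to 0}\int_{\S^{k+1}} [f_\varepsilon^\ast\omega]_I\,\chi(s)\psi(y) = 0.
\]
Combined with the $L^1_{\loc}$-convergence, this forces $\int \chi\,G = 0$ for all $\chi \in C_c^\infty((-1,1))$, hence $G \equiv 0$ almost everywhere. Exhausting $\psi$ over a countable $C_c^\infty$-dense family, then varying the multi-index $I$ and the monomial $\omega$ over the finitely many relevant choices, I conclude that for $\mathcal{L}^1$-a.e.\ $t$ every tangential coefficient of $g_t^\ast\omega$ vanishes distributionally on $\S^k_t$, i.e.\ $\rank Dg < j$ in the distributional sense.

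The main technical obstacle is pinning down the Fubini-type slicing inequality on the curved manifold $\S^{k+1}$: the Euclidean statement is classical, but one must cover $\S^{k+1}$ by bi-Lipschitz charts compatible with the spherical slicing and handle the degeneracy as $t \to \pm 1$. Once that is in place, the rest is a Fubini/dominated-convergence exchange of limits, for which the quantitative estimate of \Cref{s:distrankest} is exactly the tool needed. The hypothesis $s > (k+1)/(k+2) = 1-\tfrac{1}{k+2}$ is sharp in the following sense: it guarantees $s > 1 - 1/j$ for every relevant form degree $j \leq k+1$, so that \Cref{s:distrankest} applies simultaneously on the ambient sphere and on all of its $k$-dimensional slices.
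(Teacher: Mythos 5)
Your proof is correct and follows the same Fubini-slicing strategy as the paper, but it is substantially more careful about one step that the paper's two-line proof elides. The paper argues that
\[
 0=\lim_{\eps \to 0} \int_{(-1,1)} \int_{\{t\} \times \sqrt{1-t^2} \S^{k}} f_\eps^\ast(dp^{i_1} \wedge \ldots dp^{i_j}) \varphi
\]
``implies that for a.e.\ $t$'' the inner quantity tends to zero. As stated this is not a valid logical implication: having the $t$-integral of $G_\eps(t)$ converge to zero does not by itself force $G_\eps(t)\to 0$ pointwise a.e. (one can cook up oscillating counterexamples). You correctly identify the missing ingredients and supply exactly what is needed: (i) the Fubini slicing of the Gagliardo seminorm gives $g_{\eps,t}\to g_t$ in $W^{s,p}(\S^k_t)$ for a.e.\ $t$ along a subsequence, so that \Cref{s:distrankest} yields pointwise a.e.\ convergence of $G_\eps(t)$ to a well-defined limit $G(t)$; (ii) the quantitative bound from \Cref{s:distrankest} combined again with Fubini slicing gives a uniform $L^{p/j}$ bound (and $p/j>1$ for the relevant degrees $j\leq k$), whence Vitali upgrades the convergence to $L^1_{\loc}$; (iii) testing with product functions $\chi(t)\psi(y)$ and letting $\chi$ vary forces $G\equiv 0$; (iv) exhausting $\psi$ over a countable dense family keeps the exceptional null set of $t$'s uniform in $\psi$. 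Steps (ii) and (iv) are genuinely necessary and are not visible in the paper's write-up — without (iv) in particular the quantifier order is wrong, since the bad null set would a priori depend on the test function. So your version is not merely ``the same proof in more detail''; it repairs a step that, read literally, does not go through. Two small remarks: the curved-chart/pole-degeneracy issue you flag as ``the main technical obstacle'' is actually harmless since the conclusion only concerns a.e.\ $t\in(-1,1)$ (restrict to $|t|<1-\delta$ and exhaust over $\delta$), so you should not leave it as an open point; and your $L^{p/j}$ exponent requires $p>j$, which is only guaranteed by $p>k$ when $j\le k$ — this is fine because a $j$-form with $j=k+1$ pulls back to zero on the $k$-dimensional slice, but it deserves a sentence.
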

\begin{proof}
The fact that \eqref{eq:sliceasdas} holds is an application of Fubini's theorem, see e.g. \cite[Lemma A.2]{LS21}. For a more direct proof see \cite[\textsection 6.2.]{Leoni}.

Similarly, the rank condition also follows from Fubini's theorem, since
\[
 0 = \lim_{\eps \to 0} \int_{\S^{k+1}} f_\eps^\ast(dp^{i_1} \wedge \ldots dp^{i_j}) \varphi =
  \lim_{\eps \to 0} \int_{(-1,1)} \int_{\{t\} \times \sqrt{1-t^2} \S^{k}} f_\eps^\ast(dp^{i_1} \wedge \ldots dp^{i_j}) \varphi
\]
implies that for a.e. $t \in (-1,1)$,
\[
\lim_{\eps \to 0} \int_{\{t\} \times \sqrt{1-t^2} \S^{k}} f_\eps^\ast(dp^{i_1} \wedge \ldots dp^{i_j}) \varphi =0.
\]
\end{proof}

\section{Using the linking number: Proof of Theorem~\ref{th:nohomon}}\
As discussed in the introduction, the proof of \Cref{th:nohomon} is based on the recent arguments developed for maps into the Heisenberg group \cite{S20,HS23,HMS29}.

The main ingredient is the following lemma, which is essentially just a reformulation of the well-known fact from Algebraic Topology that the homology class $H_{N-(n-1)-1}(\R^N \setminus K))$ is nontrivial if $K$ is a homeomorphic to the $\S^{n-1}$-sphere. Indeed this fact can be found in the early chapters of any algebraic topology book, see e.g. \cite[Corollary 1.29]{Vick-1994}. However, this particular reformulation transforms this fact into an analytically easily usable tool.

\begin{lemma}\label{la:HSomega}
Let $f: \S^{n-1} \to \R^N$, $N \geq n+2$, be a homeomorphism. There exist $\eps > 0$ (depending on $f$) and $\omega \in C_c^\infty(\Ep^{n-1}\R^N)$ such that
\begin{itemize}
 \item $d\omega \equiv 0$ in a neighborhood of $f(\S^{n-1}) \subset \R^N$
 \item for any $\tilde{f} \in C^\infty(\S^{n-1},\R^N)$ with
 \[
  \|\tilde{f}-f\|_{L^\infty(\S^{n-1})} < \eps
 \]
we have
\[
 \int_{\S^{n-1}} \tilde{f}^\ast(\omega) = 1.
\]
\end{itemize}
\end{lemma}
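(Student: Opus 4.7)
The plan is to construct $\omega$ as the cut-off pullback of a top form on $\S^{n-1}$ under a smooth ``retraction'' $U \to \S^{n-1}$ defined on a neighborhood $U$ of $K := f(\S^{n-1})$, obtained via ANR theory.

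First, I would use that $\S^{n-1}$ is a compact ANR, so its homeomorphic copy $K \subset \R^N$ is a compact ANR as well. The defining property of ANRs then yields a bounded open neighborhood $U$ of $K$ in $\R^N$ together with a continuous retraction $r: U \to K$, so that $\Phi := f^{-1} \circ r : U \to \S^{n-1}$ is continuous and satisfies $\Phi \circ f = \id_{\S^{n-1}}$ (directly from $r|_K = \id_K$; no homotopy argument is needed at this stage). I would then smoothly approximate $\Phi$ by some $\phi \in C^\infty(U,\S^{n-1})$ via mollification in $\R^N$ followed by radial projection onto $\S^{n-1}$. If the approximation is sufficiently close in $C^0$, then $\phi$ and $\Phi$ are homotopic as maps $U \to \S^{n-1}$, so $\phi \circ f \simeq \Phi \circ f = \id_{\S^{n-1}}$ and hence $\deg(\phi\circ f) = 1$.

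Second, fix $\eta \in C^\infty(\Ep^{n-1}\S^{n-1})$ with $\int_{\S^{n-1}}\eta = 1$, set $\omega_0 := \phi^\ast \eta$ (a smooth closed $(n-1)$-form on $U$, since $d\eta = 0$ for dimension reasons), and cut off: pick $\chi \in C_c^\infty(U,[0,1])$ with $\chi \equiv 1$ on an open neighborhood $V$ of $K$ whose closure lies in $U$, and define $\omega := \chi\, \omega_0 \in C_c^\infty(\Ep^{n-1}\R^N)$. Then $\omega \equiv \omega_0$ on $V$, and since $\omega_0$ is closed on $U$, we get $d\omega = 0$ on the neighborhood $V$ of $f(\S^{n-1})$. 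Choose $\eps > 0$ so small that $\|\tilde f - f\|_{L^\infty(\S^{n-1})} < \eps$ forces $\tilde f(\S^{n-1}) \subset V$ and, simultaneously, makes $\phi \circ \tilde f$ homotopic in $C^0(\S^{n-1},\S^{n-1})$ to $\phi \circ f$ (by straight-line-plus-projection). For such $\tilde f \in C^\infty(\S^{n-1},\R^N)$,
\[
 \int_{\S^{n-1}} \tilde f^\ast \omega \;=\; \int_{\S^{n-1}} \tilde f^\ast \omega_0 \;=\; \int_{\S^{n-1}} (\phi\circ \tilde f)^\ast \eta \;=\; \deg(\phi\circ \tilde f) \;=\; \deg(\phi \circ f) \;=\; 1.
\]

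The main subtlety I anticipate lies in the first step: since $f$ is only assumed to be a topological homeomorphism (no smoothness or tameness), the set $K$ can in principle be a wildly embedded sphere in the high-codimension regime $N \geq n+2$, and one cannot appeal to a smooth tubular neighborhood or to a Seifert-type submanifold. This is precisely what the ANR machinery is designed to bypass: $\S^{n-1}$ is a compact ANR, so $K$ is a compact ANR closedly embedded in $\R^N$, and the standard neighborhood-retract characterization produces the retraction $r$ needed to kick off the whole construction without any regularity input on the embedding itself.
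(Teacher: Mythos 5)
The paper does not give a self-contained proof of this lemma; it only refers to \cite[Proposition 9.2]{S20} and \cite[Lemma 2.6]{HS23}, and from the surrounding discussion the intended proof there goes through Alexander duality: $H_{N-n}(\R^N\setminus K)$ is nontrivial, one chooses a dual cycle linked with $K=f(\S^{n-1})$, and $\omega$ is a form whose differential ``lives on'' that dual cycle, so that $\int_{\S^{n-1}}\tilde f^*\omega$ computes a linking number. Your argument is correct but takes a genuinely different route: instead of producing a linked cycle, you exploit that $K$, being homeomorphic to the compact manifold $\S^{n-1}$, is a compact ANR, hence a neighborhood retract of $\R^N$. Pulling $r$ back through $f^{-1}$ and smoothing gives a smooth $\phi:U\to\S^{n-1}$ with $\deg(\phi\circ f)=1$, and then $\omega$ is simply a cut-off of $\phi^*\eta$ for a unit-volume top form $\eta$; the identity $\int_{\S^{n-1}}\tilde f^*\omega=\deg(\phi\circ\tilde f)=1$ is then just homotopy invariance of degree together with the de Rham integral formula for degree. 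Each step you listed (ANR $\Rightarrow$ retraction, mollify-and-project, closedness of $\phi^*\eta$ because $\eta$ is top-dimensional, choice of $\eps$ small enough to force $\tilde f(\S^{n-1})\subset V$ and $\phi\circ\tilde f\simeq\phi\circ f$ via straight-line-plus-projection) is correct, and you correctly flag the crucial point that ANR theory is what lets you bypass the potential wildness of a merely topological embedding. What your approach buys is that it avoids invoking Alexander duality and the construction of a dual cycle (or a current-theoretic substitute for a ``surface'' when the embedding is wild), at the cost of needing the ANR neighborhood-retract theorem and smooth degree theory, which are comparably classical. One small observation: your argument never uses the hypothesis $N\geq n+2$ (it needs only that $K$ is a compact ANR in $\R^N$, so $N\geq n$ suffices); that hypothesis is inherited from the Heisenberg-group setting of the cited sources rather than being essential to the lemma, and its absence from your proof is not a gap.
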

For a proof we refer to \cite[Proposition 9.2.]{S20} or \cite[Lemma 2.6.]{HS23}. $d\omega$ essentially represents a ``surface'' that is linked with $f(\S^{n-1})$ -- where ``surface'' is to be understood in a general sense, see \cite{H19}.

\begin{proof}[Proof of \Cref{th:nohomon}]
Assume to the contrary the existence of $u$ as in \Cref{th:nohomon}.

By \cite[Lemma A.4]{LS21} there exist $u_\eps \in C^\infty(\B^n)$ such that $[u_\eps-u]_{W^{s,\frac{n}{s}}(\B^n)} \xrightarrow{\eps \to 0} 0$ and $u_\eps \Big |_{\partial \B^n}$ uniformly converges to $f$ as $\eps \to 0$.

From \Cref{la:HSomega} we find $\omega$ so that $d\omega \equiv 0$ around $f(\S^{n-1})$ and
\[
 \int_{\S^{n-1}} u_\eps^\ast(\omega) = 1\quad  \forall \eps \ll 1
\]
By Stokes' theorem this is equivalent to
\begin{equation}\label{eq:appllemma}
 \int_{\B^n} u_\eps^\ast(d\omega) = 1 \quad \forall \eps \ll 1
\end{equation}
If we write
\[
d\omega = \sum_{I} \kappa_I dp^I
\]
where each $I=(i_1,\ldots,i_{n})$ is a strictly ordered tuple in $\{1,\ldots,N\}$, then we have by \eqref{eq:appllemma}
\[
1= \sum_{I} \int_{\B^n} u_\eps^\ast (dp^I) \kappa_I(u_\eps) \quad \forall \eps \ll 1
\]
Moreover, by the uniform Lipschitz continuity of $\kappa$
\begin{equation}\label{eq:convkappaiueps}
 \|\kappa_I(u_\eps)  -\kappa_I(u)\|_{W^{s,\frac{n}{s}}(\B^n)} \aleq \|u_\eps -u\|_{W^{s,\frac{n}{s}}(\B^n)}  \xrightarrow{\eps \to 0} 0.
\end{equation}
Since $d\omega \equiv 0$ in a neighborhood of $f(\S^{n-1})$ we also have $d\omega \equiv 0$ in a neighborhood of $u_\eps(\S^{n-1})$ for all suitably small $\eps$. For such $\eps$ we have $\kappa_I(u_\eps) \equiv 0$ on $\partial \B^n$ (in the classical sense).

Since the trace operator is an extension of the continuous trace operator, we find that
\[
 \kappa_I(u_\eps) \in W^{s,\frac{n}{s}}_0(\B^n).
\]
By the convergence \eqref{eq:convkappaiueps} we conclude
\[
 \kappa_I(u) \in W^{s,\frac{n}{s}}_0(\B^n).
\]
Here $W^{s,p}_0(\B^n)$ the closure of $C_c^\infty(\B^n)$-maps under the $W^{s,p}(\B^n)$-norm and we have used the trace identification with the continuous trace (since $s\frac{n}{s}=n>1$), see \cite[Chapter 9.]{Leoni}.

Thus we find
\[
\varphi_\delta \in C_c^\infty(\B^n)
\]
with
\[
[\varphi_\delta-\kappa_I(u)]_{W^{s,\frac{n}{s}}(\B^n)} \xrightarrow{\delta \to 0} 0.
\]
In particular for $\eps$ and $\delta$ suitably small we have
\[
[\varphi_\delta-\kappa_I(u_\eps)]_{W^{s,\frac{n}{s}}(\B^n)} \ll 1 \quad \forall \eps \in (0,\eps_0), \delta \in (0,\delta_0).
\]
Recall that $I$ is an $n$-tuple. By the continuity of distributional Jacobian, \Cref{s:distrankest} for $k=n$, since $s\geq \frac{n}{n+1} > 1-\frac{1}{n}$ and $\frac{n}{s} > n$, we have
\[
 \sum_{I} \int_{\B^n} u_\eps^\ast (dp^I) \brac{\varphi_\delta - \kappa_I(u_\eps)} \aleq [u]_{W^{s,\frac{n}{s}}(\B^n)}^{n} [ \varphi_\delta - \kappa_I(u_\eps)]_{W^{(1-s)n,\frac{1}{1-s}}}
\]
Since $s \geq \frac{n}{n+1}$ we see that $(1-s)n \leq s$, and thus by Sobolev embedding
\[
 \sum_{I} \int_{\B^n} u_\eps^\ast (dp^I) \brac{\varphi_\delta - \kappa_I(u_\eps)} \aleq [u]_{W^{s,\frac{n}{s}}(\B^n)}^{n} [ \varphi_\delta - \kappa_I(u_\eps)]_{W^{s,\frac{n}{s}}}.
\]
We conclude that for $\eps_1$ and $\delta_1$ suitably small,
\[
\frac{1}{2} \leq \sum_{I} \int_{\B^n} u_\eps^\ast (dp^I) \varphi_\delta \quad \forall \eps \in (0,\eps_1), \delta \in (0,\delta_1)
\]
On the other hand, by the assumption of $\rank Du \leq n-1$ in distributional sense, we have
\[
 \lim_{\eps \to 0} \int_{\B^n} u_\eps^\ast (dp^I) \varphi_\delta  =0.
\]
This is a contradiction, and we can conclude.
\end{proof}

\bibliographystyle{abbrv}
\bibliography{bib}

\end{document}